\theoremstyle{plain}
\theoremstyle{plain}
\newtheorem{theorem}{Theorem} [section]
\newtheorem{lemma}[theorem]{Lemma}
\newtheorem{proposition}[theorem]{Proposition}
\theoremstyle{definition}
\newtheorem{remark}[theorem]{Remark}
\numberwithin{theorem}{section}
\numberwithin{equation}{section}
\numberwithin{figure}{section}
\def\mean#1{\mathchoice
         {\mathop{\kern 0.2em\vrule width 0.6em height 0.69678ex depth -0.58065ex
                 \kern -0.8em \intop}\nolimits_{\kern -0.4em#1}}%
         {\mathop{\kern 0.1em\vrule width 0.5em height 0.69678ex depth -0.60387ex
                 \kern -0.6em \intop}\nolimits_{#1}}%
         {\mathop{\kern 0.1em\vrule width 0.5em height 0.69678ex
             depth -0.60387ex
                 \kern -0.6em \intop}\nolimits_{#1}}%
         {\mathop{\kern 0.1em\vrule width 0.5em height 0.69678ex depth -0.60387ex
                 \kern -0.6em \intop}\nolimits_{#1}}}
\def\R{\mathbb R}
\def\a{\alpha}
\def\aalpha{{\mbox{\boldmath$\alpha$}}}
\def\e{\varepsilon}
\newcommand{\p}{\partial}
\def\U{\mathcal O}
\def\A{{\bf{\mathcal A}}}
\def\B{{\bf{\mathcal B}}}
\def\C{{\bf{\mathcal C}}}
\def\normm{|\!|\!|}
\DeclareMathOperator*{\osc}{osc}
\title{Sobolev regularity for Monge-Amp\`ere type equations}
\author[G. De Philippis]{Guido De Philippis}
\address{Scuola Normale Superiore,
p.za dei Cavalieri 7, I-56126 Pisa, Italy}
\email{guido.dephilippis@sns.it}
\author[A. Figalli]{Alessio Figalli}
\address{The University of Texas at Austin,
Mathematics Dept. RLM 8.100,
2515 Speedway Stop C1200,
Austin, Texas 78712-1202, USA}
\email{figalli@math.utexas.edu}
\keywords{}
\begin{document}
\begin{abstract}
In this note we prove that,
if the cost function satisfies some necessary structural conditions
and the densities are bounded away from zero and infinity,
then strictly $c$-convex potentials
arising in optimal transportation 
belong to $W^{2,1+\kappa}_{\rm loc}$ for some $\kappa>0$.
This generalizes some recents results \cite{DepFi,DFS,S} concerning 
the regularity of strictly convex Alexandrov solutions of the Monge-Amp\`ere equation with right hand side bounded away from zero and infinity.
\end{abstract}

\maketitle

\section{Introduction}

Let $\Omega\subset \R^n$ be a bounded open set. We want to investigate the regularity of solutions to Monge-Amp\`ere type equations of the form
\begin{equation}
\label{eq:MA1}
\det\big(D^2u-\A(x,Du)\big)=f \qquad \text{in $\Omega$},
\end{equation}
where $f \geq 0$ and $\A(x,p)$ is a $n\times n$ symmetric matrix.

This class of equations naturally arises in optimal transportation, and in reflector and refractor shape design problems.
In these applications, the matrix $\A$ and the right and side $f$ are given by
$$
\A(x,D u(x))=-D_{xx} c(x,T_u(x)), \qquad f(x)=\left|\det\bigl(D_{xy}c(x,T_u(x))\bigr)\right| \frac{\rho_0(x)}{\rho_1(T_u(x))},
$$
where $c(x,y)$ represents the cost function, $\rho_0$ and $\rho_1$ are
probability densities, and $T_u$ is the optimal transport map
sending $\rho_0$ onto $\rho_1$.
Under a twist assumption on the cost (see {\bf (C2)} below), the map $T_u$ is uniquely determined
through the relation
$$
-D_xc(x,T_u(x))=Du(x).
$$
Moreover, when $\A\equiv 0$ the above equation reduces to the classical Monge-Amp\`ere equation.

The regularity for the above class of equations has received a lot of attention in the last years \cite{FKM,FL,Liu,LTWC2a,Loe,MTW,TW1,TW2}.
In particular, under some necessary structural conditions on $\A$ (see {\bf (C1)} below),
one can show that if \(f\) is smooth then \(u\) is smooth as well \cite{LTWC2a,MTW,TW1,TW2}. In
addition, it is proved in \cite{FKM} that solutions are locally $C^{1,\alpha}$
when $f$ is merely  bounded away from zero and infinity (see also \cite{FL,Liu}).\\

Recently, the authors introduced new techniques to address the Sobolev regularity of $u$
when $\A\equiv 0$:
more precisely, under the assumption that $f$ is bounded away from zero and infinity,
it is proved in \cite{DepFi} that $D^2 u \in L\log L_{\rm loc}(\Omega)$,
and with a variant of the same techniques this result has been improved in \cite{DFS} to 
$u \in W^{2,1+\kappa}_{\rm loc}(\Omega)$ for some $\kappa>0$ (see also \cite{S}). 
Let us mention that these results played a crucial role in \cite{ACDF1,ACDF2} to show the
 existence of distributional solutions to the semi-geostrophic system.

The aim of this paper is to extend the $W^{2,1+\kappa}_{\rm loc}$ regularity
to the general class of Monge-Amp\`ere equations
in \eqref{eq:MA1}. Apart from its own interest, it seems likely that this result
could have applications in the study of generalized semi-geostrophic system on Riemannian manifolds \cite{CDRS}, in particular on the sphere \cite{Loe2} and its perturbations \cite{DG1,DG2,FR,FRVsn}.

In order to describe our result, we need to introduce some more notation and the main assumptions on the cost functions.\\

Let $X\subset \R^n$ be an open set,
and $u:X \to \R$ be a $c$-convex function, i.e., $u$ can be written as 
\begin{equation}
\label{eq:cconv}
u(x)=\max_{y \in \overline Y} \{-c(x,y) +\lambda_y\}
\end{equation}
for some open set $Y\subset \R^n$, and $\lambda_y \in \R$ for all $y \in \overline Y$.
We are going to assume that $u$ is an Alexandrov solution of \eqref{eq:MA1}
inside some open set $\Omega \subset X$, i.e.,
$$
\left|\p^cu(E) \right|= \int_E f\qquad \text{for all $E\subset \Omega$ Borel},
$$
where 
$$
\p^cu(E) :=\bigcup_{x \in E}\p^cu(x), \qquad 
\p^cu(x):=\{y \in \overline Y \,:\,u(x)=-c(x,y)+\lambda_y\},
$$
and $|F|$ denotes the Lebesgue measure of a set $F$.
It is well-known that, in order to prove some regularity results,
\eqref{eq:MA1} needs to be coupled with some boundary
conditions:
for instance, when $\A\equiv 0$ and $f\equiv 1$,
solutions are smooth whenever they are strictly convex,
and to obtain strict convexity some suitable boundary conditions are needed \cite{CA1,CA3}. 

For the general case in \eqref{eq:MA1},
let $u$ be a $c$-convex function associated to an optimal transport problem,
and for any $y \in \overline Y$ define the contact set
$$\Lambda_y:=\{x \in X \,:\,u(x)=-c(x,y)+\lambda_y\}.$$ 
Under some structural assumptions on the cost functions (which we shall describe below)
and some convexity hypotheses on the supports of the source and target measure,
it has been proved in \cite{FKM} that $u$ is an Alexandrov solution of \eqref{eq:MA1}
inside $X$,
and it is strictly $c$-convex (i.e., for any $y \in \p^cu(X)$ the contact set
$\Lambda_y$ reduces to one point)
 provided $f$ is bounded away from zero and infinity.

Here, since we want to investigate the interior regularity of $u$,
instead of assuming that $u$ comes from an optimal transportation problem 
where the supports of the source and target measure enjoy some global ``$c$-convexity'' property,
we work assuming directly that $u$ is a strictly $c$-convex Alexandrov solution
near some point $\bar x \in X$,
and we prove regularity of $u$ in a neighborhood of $\bar x$.
This has the advantage of making our result more general and flexible for possible future applications.

Hence, we assume that there exist $(\bar x, \bar y)\in X\times Y$
such that $\Lambda_{\bar y}=\{\bar x\}$, we consider a neighborhood $\Omega$
of $\bar x$ given by
\begin{equation}
\label{eq:omega}
\Omega:=\{x \in X\,:\,u(z)<-c(x,\bar y)+\lambda_y+\delta\},
\end{equation}
where $\delta>0$ is a small constant chosen so that $\Omega \subset \subset X$
and $\partial^c u(\Omega)\subset \subset Y$
(such a constant $\delta$ exists because $\Lambda_{\bar y}:=\{\bar x\}$). 
Also, we assume that $u$ is an  Alexandrov solution of
\begin{equation}
\label{eq:MA}
\left\{
\begin{array}{ll}
\det\big(D^2u-\A(x,Du)\big)=f & \text{in $\Omega$},\\
u=-c(\cdot,\bar y)+{\rm const} & \text{on $\partial \Omega$}.
\end{array}
\right.
\end{equation}

Before stating our result, let us introduce the main conditions on the cost function:
let $\Omega$ be as above, and let $\U\subset\subset Y$ be a open neighborhood of 
$\partial^c u(\Omega)$. We define
\begin{equation}\label{cnorm}
\normm c\normm :=\|c\|_{C^3(\overline \Omega \times \overline \U)}+\|D_{xxyy}c\|_{L^\infty(\overline \Omega \times \overline \U)}+\left\|\log |\det D_{xy}c|\right\|_{L^\infty(\overline \Omega \times \overline \U)},
\end{equation}
and assume that the following hold: 
\medskip
\begin{enumerate}
\item [{\bf (C0)}]  $\normm c\normm <\infty$. 
\item[{\bf (C1)}] For every $x\in \Omega$ and $p:=-D_xc(x,y)$ with $y \in \U$, it holds
\begin{equation}\label{MTWc}
D_{p_kp_\ell}\A_{ij}(x,p)\xi_i\xi_j\eta_k \eta_\ell\ge 0,\qquad \forall \, \xi,\eta \in \R^n,\, \xi \cdot \eta=0,
\end{equation}
where $\A$ is defined through $c$ by $\A_{ij}(x,p):=-D_{x_ix_j}c(x,y)$, 
and we use the summation convention over repeated indices.
\end{enumerate}
\smallskip

Let us point out that, up to reduce the size of
$\Omega$ and $\U$ (this is possible because $\Omega \to \{\bar x\}$
and $\partial^cu(\Omega) \to \{\bar y\}$ as $\delta\to 0$),
as a consequence of {\bf (C0)} (more precisely, from the fact that
$\det D_{xy}c(\bar x,\bar y)\neq 0$
and by the implicit function theorem) we can assume that the following holds:
\medskip
\begin{enumerate}
\item[{\bf (C2)}] For every $(x,y)\in \Omega\times \U$, the maps
$x\in \Omega \mapsto - D_yc(x,y)$ and $y\in \U \mapsto - D_xc(x,y)$ are
diffeomorphisms on their respective ranges. 
\end{enumerate}

\smallskip

We also notice that, because of the boundary condition $u=-c(\cdot,\bar y)+{\rm const}$ on $\partial \Omega$, if $f$ is bounded away from zero and infinity inside $\Omega$, 
then any  $c$-convex Alexandrox solution of \eqref{eq:MA} is strictly $c$-convex inside $\Omega$
(this is an immediate consequence of \cite[Remark 7.2]{FKM}).
Here is our result:

\begin{theorem}\label{w21eps}Let $u:\Omega \to \R$ be a $c$-convex
Alexandrov solution of \eqref{eq:MA}.
Assume that $c$ satisfies conditions {\bf (C0)}-{\bf (C2)}, and that $0<\lambda\le f\le 1/\lambda$. Then $u\in W_{\rm loc}^{2,1+\kappa}(\Omega)$ for some $\kappa>0$.
\end{theorem}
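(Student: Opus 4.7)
The plan is to adapt the strategy of \cite{DFS,DepFi,S} from the classical Monge-Amp\`ere setting ($\A\equiv 0$) to the present $c$-convex framework, using $c$-sections and renormalizations in $c$-coordinates in place of convex sections and affine rescalings.

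\textbf{Setup and normalization.} First I would study the family of $c$-sections
$$
S^c_h(y):=\{x\in \Omega\,:\,u(x)<-c(x,y)+\lambda_y+h\}, \qquad y\in \p^cu(\Omega),\ h>0\ \text{small}.
$$
Under {\bf (C0)}--{\bf (C2)}, strict $c$-convexity, and $0<\lambda\le f\le 1/\lambda$, these sections shrink to the corresponding contact point as $h\to 0$ and, thanks to {\bf (C1)} and the results of \cite{FKM}, become (essentially) convex in the $c$-exponential coordinates $x\mapsto -D_yc(x,y)$. For each such section I would apply John's lemma in these coordinates, obtaining an affine map $L_{y,h}$ that normalizes the section between $B_1$ and $B_n$. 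After rescaling $u$ by the appropriate factor, one is led to a normalized potential $\tilde u_{y,h}$ on a normalized domain, which solves a Monge-Amp\`ere type equation $\det(D^2\tilde u-\tilde \A)=\tilde f$ with $\lambda\le \tilde f\le 1/\lambda$ and with renormalized cost $\tilde c_{y,h}$.

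\textbf{Compactness and quantitative estimate.} The key observation is that, because of the $C^3$ and $D_{xxyy}c\in L^\infty$ bounds in {\bf (C0)}, the rescaled cost $\tilde c_{y,h}$ differs from the linear cost $-x\cdot y$ by a term whose $C^2$ norm tends to $0$ as $h\to 0$, so in particular $\tilde\A_{y,h}\to 0$. Consequently, the family $\{\tilde u_{y,h}\}$ is precompact and any limit $u_\infty$ is a strictly convex Alexandrov solution of a classical Monge-Amp\`ere equation $\det D^2 u_\infty=f_\infty$ with $\lambda\le f_\infty\le 1/\lambda$ on a normalized convex domain. Applying the $W^{2,1+\kappa}$ estimates of \cite{DepFi,DFS,S} to $u_\infty$, a compactness/contradiction argument yields a density-decay estimate of the form
$$
\Bigl|\bigl\{x\in S^c_{h/2}(y)\,:\,\|D^2 u(x)\|>M\bigr\}\Bigr|\le \theta\,\bigl|S^c_h(y)\bigr|,
$$
for some universal $\theta<1$ and a suitable $M$ (depending on $S^c_h(y)$), valid for all sufficiently small $h$.

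\textbf{Iteration and main obstacle.} Combining this with the engulfing and doubling properties of $c$-sections (valid under {\bf (C0)}--{\bf (C2)}, cf.\ \cite{FKM}), I would run a Calder\'on-Zygmund-type covering argument to propagate the estimate across scales and conclude $\|D^2 u\|\in L^{1+\kappa}_{\rm loc}(\Omega)$. The hardest step is making the renormalization and compactness argument work rigorously in the $c$-convex setting: one needs uniform control over the $c$-exponential charts, convergence of the rescaled costs to the linear cost, stability of Alexandrov solutions under this convergence, and quantitative inclusions between the $c$-sections and their normalized affine images. A further subtlety is that the covering scheme must respect the non-affine geometry of $c$-sections, requiring careful use of the theory developed in \cite{FKM}.
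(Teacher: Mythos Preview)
Your compactness reduction to classical Monge-Amp\`ere has a real gap: the claim that the rescaled $\tilde\A_{y,h}\to 0$ as $h\to 0$ is false in general. After normalizing a section by $L^{-1}=\mathrm{diag}(r_1,\ldots,r_n)$, the coefficients transform as
\[
\C_{ij,k\ell}=(r_1\cdots r_n)^{2/n}\,\frac{r_ir_j}{r_kr_\ell}\,\B_{ij,k\ell},
\]
and although $(r_1\cdots r_n)^{2/n}\approx h\to 0$, the anisotropic factor $r_ir_j/(r_kr_\ell)$ is unbounded unless the sections already have bounded eccentricity --- which is essentially the $L^\infty$ Hessian bound one does not have. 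For instance $\C_{nn,11}\approx h\,r_n^2/r_1^2$ can blow up. Thus the renormalized equation does \emph{not} converge to $\det D^2u_\infty=f_\infty$, and the black-box appeal to \cite{DepFi,DFS,S} on the limit fails. (If instead you rescale isotropically so that $\tilde\A\to 0$, the section is no longer normalized and the classical estimates are inapplicable; this is the basic tension.)

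The paper avoids this by proving the key $L^1$ estimate on the normalized section \emph{directly}, without passing to a limit. In the decomposition of $\sum_i\C_{ii,k\ell}\partial_k\bar w\,\partial_\ell\bar w$, the components that can blow up are precisely those with $k,\ell\neq i$, and the MTW condition \textbf{(C1)} (in the form \eqref{MTWC}) forces each such block $\sum_{k,\ell\neq i}\C_{ii,k\ell}\partial_k\bar w\,\partial_\ell\bar w$ to be nonnegative, so it can be discarded as a lower bound. The remaining pieces $\C_{ii,ik}$ and $\C_{ii,ii}$ are universally bounded using only $h/r_k\le C$ and $r_n\le C$; the Laplacian part is handled by the divergence theorem and the universal gradient bound \eqref{grad}. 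From there a Chebyshev argument plus the equation gives a set of definite measure in the normalized section where $|D^2\bar w-\C|\approx 1$, and the iteration on level sets $\{|D^2u-\A|\ge M^m\}$ via the Vitali covering of Proposition~\ref{vitali} follows the scheme of \cite{DFS}. So your covering/iteration outline is on target, but the crucial missing ingredient is the use of the sign information from \textbf{(C1)} to neutralize the terms that survive the anisotropic rescaling.
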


Theorem \ref{w21eps} 
generalizes the corresponding result for the classical Monge-Amp\`ere
equation to the wider class of equations considered here.
With respect to the arguments in \cite{DepFi,DFS}, 
 additional complications arise from the fact that, in contrast with the classical Monge-Amp\`ere equation, in general
\eqref{eq:MA1} is not affinely invariant.\\

\textit{Acknowledgements:} AF is partially supported by NSF Grant DMS-0969962.
Both authors acknowledge the support of the ERC ADG Grant GeMeThNES.
The first author thanks the hospitality of the Mathematics Department at the University
of Texas at Austin, where part of this work has been done.

\section{Notation and preliminary results}
Through all the paper, we call \emph{universal} any constant which depends only on the data, i.e., on $n$, $\Omega$, $\U$, $\lambda$, and $\normm
c\normm$. We use $C$ to denote a universal  constant larger than $1$ whose value may change from line to line, and we use the notation $a\approx b$ to indicate that the ratio $a/b$ is bounded from above and below by positive universal constants.\\

An immediate consequence of the definition of $c$-convexity \eqref{eq:cconv} is that,
for any $x_0\in X$, there exists $y_0\in \overline Y$ such that
\[
u(x)\ge - c(x,y_0)+u(x_0)+c(x_0,y_0) \quad \forall \,x\in X,
\]
and in this case $y_0 \in \p^cu(x_0)$.
If in addition $u \in C^2$, then it is easily seen that $Du(x_0)=-D_xc(x_0,y_0)$
and  $D^2u(x_0)\ge -D_{xx} c(x_0, y_0)=\A(x_0,Du(x_0))$, where $\A$ is defined in {\bf (C1)} above.
In particular equation \eqref{eq:MA} is degenerate elliptic when restricted to $c$-convex function.

It has been discovered independently in \cite{FKM} and \cite{Liu} that,
because of {\bf (C1)},
for any $x_0 \in \Omega$
and $y_0 \in \p^cu(x_0)$,
through the change of variables $x\mapsto q(x):=-D_y c (x, y_0)$ the function
\begin{equation}\label{baru}
\bar u(q):=u(x(q))+c(x(q),y_0)-u(x_0)-c(x_0,y_0)
\end{equation}
has convex level sets inside $\Omega$ (here and in the sequel $x(q)$ denotes the inverse of $q(x)$,
which is well defined because of {\bf (C2)}).
Moreover $\bar u$ is $\bar c$-convex, where 
\begin{equation}\label{cctilda}
\bar c(q,y):=c(x(q),y)-c(x(q),y_0),
\end{equation}
see \cite[Theorem 4.3]{FKM}.

Since $u$ solves \eqref{eq:MA} one can check by a
direct computation that $\bar u$ solves
\begin{equation}\label{MAbaru}
\det\big(D^2\bar u-\B(q,D\bar u)\big)=g,
\end{equation}
with 
\begin{equation}\label{eq:bij}
\B_{ij}(q,p)=-D_{q_iq_j} \bar c(q,T_{\bar u}(q)) \quad \text{and}
\quad g(q)=f(x(q))\left[\det D_{xy}c(x(q),T_{\bar u}(q))\right]^{-2},
\end{equation}
where $T_{\bar u}$
is the map uniquely identified by the relation
$D\bar u(q)=-D_qc(q,T_{\bar u}(q))$.
Moreover it holds
\begin{equation}\label{0inunpunto}
\B_{ij}(\cdot ,0)\equiv 0,\qquad D_{p}\B_{ij}(\cdot ,0)\equiv 0,
\end{equation}
so using Taylor's formula we can write
\begin{equation}\label{biist}
\B_{ij}(q,D\bar u)=\B_{ij,k\ell}(q, D\bar u)\partial_k \bar u\partial_l \bar u 
\end{equation}
where
\begin{equation}\label{bijst}
\B_{ij,k\ell}(q, D\bar u(q)):=\int_0^1 D_{p_kp_\ell}\B_{ij}(q,\tau D\bar u(q))\, d\tau. 
\end{equation}
In addition, since condition {\bf (C1)} is tensorial \cite{MTW,Loe,KM} and $\normm c\normm$ involves only mixed fourth derivative, it is easily seen that $\normm \bar c\normm \approx \normm  c\normm$ and $\B$ satisfies the same assumptions as $\A$.
In particular {\bf (C1)} and \eqref{bijst} imply that
\begin{equation}\label{MTWB}
\B_{ij,k\ell}\xi_i\xi_j \eta_k\eta_\ell\ge 0\quad \forall \, \xi\cdot \eta=0.
\end{equation}
\medskip

Given a $C^1$ $c$-convex function as above, for any $x_0\in \Omega$, $y_0=T_u(x_0)$,
and $h\in \R^+$, we define the \emph{section} centered at $x_0$ of height $h$ as 
\[
S^u_h(x_0):=\{x \in \Omega\,:\,\ u(x)\le -c(x,y_0)+u(x_0)+c(x_0,y_0)+h\}.
\]
Assuming that $S_h^u(x_0)\subset \Omega$,
through the change of variables $x \mapsto q(x):=-D_y c(x,y_0)$ this section is transformed into the \emph{convex} set
\[
Q^{\bar u}_h(q_0):=-D_y c(S^u_h(x_0),y_0)=\{q:\ {\bar u}(q)\le h\}.
\]
When no confusion arises, we will often abbreviate $S_h(x_0)$ and $Q_h(q_0)$ for $S^u_h(x_0)$ and $Q^{\bar u}_h(q_0)$.

We also recall \cite{john} that, given an open bounded convex set $Q$, there exists an ellipsoid $E$ such that
\begin{equation}
\label{eq:john}
E\subset Q\subset n E,
\end{equation}
where the dilation is done with respect to the center of $E$.
We refer to it as the John ellipsoid of $Q$,
and we say that $Q$ is normalized if $E=B(0,1)$.
An immediate consequence of \eqref{eq:john} is that any open bounded convex set  $Q$
admits  an affine  transformation $L$ such that $L(Q)$ is normalized.
Hence, given $u$ and $S_h^u(x_0)$ as above, we can consider $\bar u$,
his section $Q_h^{\bar u} (q_0)$, and the normalizing affine transformation $L$.
Then we define $\bar  w : L(Q_h) \to \R $ as 
\begin{equation}\label{barbarw}
 \bar w ( q'):=(\det L)^{2/n} \bar u(q), \qquad \,q':=Lq.
\end{equation}
It is easy to check that $\bar w$ solves
\begin{equation}\label{MAbarw}
\det\big(D^2 \bar w(q')-\C( q', D\bar w(q'))\big)=g(L^{-1}q'),
\end{equation}
where
\begin{equation*}
\C( q', D\bar w(q')):=(\det L)^{2/n}(L^*)^{-1} \B\left(L^{-1}q',(\det L)^{-2/n} L^*D \bar w(q')\right) L^{-1} 
\end{equation*}
Up to an isometry, we can assume that 
\[
E=\biggl\{q :\ \sum_{i=1}^n \frac{q_i^2}{r_i^2} \le 1\biggr\},
\]
with $r_1\le \ldots\le r_n$. Then  $L^{-1} ={\rm diag}(r_1, \dots,r_n)$, and
\begin{equation}\label{Cijst}
\C_{ij}( q',D\bar w(q'))=\C_{ij,k\ell}( q', D\bar w(q'))\partial_k \bar w\partial_\ell \bar w 
\end{equation}
with
\begin{equation}\label{CB}
\C_{ij,k\ell}(q',D\bar w(q'))=(r_1\dots r_n)^{2/n} \frac{r_ir_j}{r_k r_\ell }\B_{ij,k\ell}(q, D\bar u(q)),
\end{equation}
see \eqref{bijst}. Moreover, by \eqref{MTWB}
(or again because of the tensorial nature of condition {\bf (C1)})
\begin{equation}\label{MTWC}
\C_{ij,k\ell}\xi_i\xi_j \eta_k\eta_\ell\ge 0\quad \forall \, \xi\cdot \eta=0.
\end{equation}
\medskip

Still with the same notation as above,
we also define the \emph{normalized size} of a section $S_h(x_0)$ as
\begin{equation}\label{normalizedsize}
\aalpha(S_h(x_0))=\aalpha(Q_h(q_0)):=\frac{|L|^2}{(\det L)^{2/n}}.
\end{equation}
Notice that, even if $L$ may not be unique, $\aalpha$ is well defined up to universal constants. In case $u$ is $C^2$ in a neighborhood of $x_0$, by a simple Taylor expansion 
of $\bar u$ around $q_0$ it is easy to see that there exists \(h(x_0)>0\)
small such that
\begin{equation}\label{hessubar}
\aalpha(S_h(x_0))=\aalpha(Q_h(q_0))\approx |D^2 \bar u (q_0)|\qquad \forall\, h\le h(x_0),
\end{equation}
where  \(q_0:=q(x_0)\).
Since $u$ and $\bar u$ are related by a diffeomorphism, the following lemma holds:

\begin{lemma}\label{eccen}
Let $\Omega'\subset \Omega$, and $u\in C^2(\Omega')$ be a strictly $c$-convex function
such that $\|Du\|_{L^\infty(\Omega')}$ is universally bounded. Then there exists a universal constants $M_1$ such that the following holds: For every $x_0 \in \Omega$ there exists a height $\bar h(x_0)>0$ such that if $|D^2 u(x_0)|\ge M_1$, then 
\begin{equation}\label{eq:ecc}
|D^2 u(x_0)|\approx \aalpha (S_h(x_0)) \qquad \forall\, h\le \bar h(x_0).
\end{equation}
\end{lemma}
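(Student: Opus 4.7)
The plan is to reduce Lemma~\ref{eccen} to the second-order expansion \eqref{hessubar} already available for $\bar u$, and then to transfer the estimate back to $u$ by comparing $D^2u(x_0)$ and $D^2\bar u(q_0)$ through the change of variables \eqref{baru}. Since the normalized size $\aalpha$ is defined only via the convex set $Q_h(q_0)$, the identity $\aalpha(S_h(x_0))=\aalpha(Q_h(q_0))$ is already built into the definition \eqref{normalizedsize}, so the whole issue is to show $|D^2 u(x_0)|\approx |D^2\bar u(q_0)|$ when $|D^2u(x_0)|$ is large.

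First, fix $x_0\in \Omega'$, set $y_0:=T_u(x_0)$, and define $\bar u$ via \eqref{baru} through the diffeomorphism $q(x):=-D_yc(x,y_0)$. Since $u$ is strictly $c$-convex and $C^2$ with $\|Du\|_{L^\infty(\Omega')}$ universally bounded, $y_0\in \partial^cu(\Omega)\subset\U$, so by {\bf (C0)} every quantity involving $c$ at $(x_0,y_0)$ is universally controlled (including $|D_{xy}c(x_0,y_0)|$ and $|\det D_{xy}c(x_0,y_0)|^{-1}$). The function $\bar u$ is then $C^2$ near $q_0:=q(x_0)$, so \eqref{hessubar} applies and yields a height $h(x_0)>0$ such that $\aalpha(Q_h(q_0))\approx |D^2\bar u(q_0)|$ for every $h\le h(x_0)$.

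Next, I would perform the chain-rule computation. Writing $F_k(x):=\partial_{x_k}u(x)+\partial_{x_k}c(x,y_0)$, the optimality condition $Du(x_0)=-D_xc(x_0,y_0)$ gives $F(x_0)=0$, so differentiating \eqref{baru} twice and evaluating at $q_0$ makes the first-order contributions vanish, leaving
\[
D^2\bar u(q_0)=M^\top\bigl(D^2u(x_0)-\A(x_0,Du(x_0))\bigr)\,M,\qquad M:=\bigl(\partial x/\partial q\bigr)\big|_{q_0}=-\bigl(D_{xy}c(x_0,y_0)\bigr)^{-1}.
\]
By the universal bounds on $M$ and $M^{-1}$, this gives $|D^2\bar u(q_0)|\approx |D^2u(x_0)-\A(x_0,Du(x_0))|$. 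Since $|\A(x_0,Du(x_0))|=|D_{xx}c(x_0,y_0)|\le \normm c\normm$, choosing $M_1:=2\normm c\normm$ and using that by $c$-convexity $D^2u(x_0)-\A(x_0,Du(x_0))\ge 0$ (so its norm equals its largest eigenvalue), Weyl's inequality implies $|D^2u(x_0)-\A(x_0,Du(x_0))|\approx |D^2u(x_0)|$ whenever $|D^2u(x_0)|\ge M_1$. Combining with the previous step gives $|D^2\bar u(q_0)|\approx |D^2u(x_0)|$, and setting $\bar h(x_0):=h(x_0)$ and chaining with \eqref{hessubar} and the identity $\aalpha(S_h(x_0))=\aalpha(Q_h(q_0))$ yields \eqref{eq:ecc}.

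\emph{Main obstacle.} There is no deep obstacle; the argument is a careful chain-rule expansion. The one point requiring attention is that the bounded perturbation $\A(x_0,Du(x_0))$ must be negligible compared with $D^2u(x_0)$, which is precisely what the threshold $|D^2u(x_0)|\ge M_1$ ensures via the $c$-convexity constraint $D^2u\ge \A$. All remaining constants are universal because $y_0\in\U$ and the relevant norms of $c$ are controlled by $\normm c\normm$, so the constants implicit in \eqref{eq:ecc} are indeed universal, as required.
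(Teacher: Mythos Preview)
Your proof is correct and follows essentially the same route as the paper: differentiate \eqref{baru} twice, control the lower-order terms so that $|D^2u(x_0)|$ and $|D^2\bar u(q_0)|$ are comparable once $|D^2u(x_0)|\ge M_1$, and then invoke \eqref{hessubar}. Your version is in fact slightly sharper than the paper's, since you observe that the first-order remainder $(D_xu+D_xc)\,D_{qq}x$ vanishes \emph{exactly} at $q_0$ (because $Du(x_0)=-D_xc(x_0,y_0)$), yielding the clean identity $D^2\bar u(q_0)=M^\top\bigl(D^2u(x_0)-\A(x_0,Du(x_0))\bigr)M$, whereas the paper simply carries those terms along and absorbs them into the additive constant $C(1+|Du(x_0)|)$.
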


\begin{proof}
Differentiating twice the relation \eqref{baru} we obtain
\[
D_{qq} \bar u=D_{q} x D_{xx} u+D_{qq} x D_xu+D_q xD_{xx} c +D_{qq}x D_x c,
\]
which implies that
\begin{equation}
\label{eq:ecc2}
\nu|D^2 \bar u(q_0)|-C\bigl(1+|Du(x_0)|\bigr)\le |D^2 u(x_0)|\le \frac{1}{\nu}|D^2 \bar u(q_0)|+C\bigl(1+|Du(x_0)|\bigr)
\end{equation}
for some universal constants $\nu,C>0$. 
Since by assumption $Du$ is universally bounded inside $\Omega'$,
\eqref{eq:ecc} follows by \eqref{eq:ecc2} and \eqref{hessubar},
provided $M_1$ is sufficiently large.
\end{proof}

We  show now some geometric properties of sections
and some estimates for solutions of \eqref{eq:MA} which will play a major role
in the sequel. Here, the dilation of a section $S_h(x)$ is intended with respect to $x$.
\begin{proposition}[Properties of section]\label{secprop}
Let $u$ be a $c$-convex Aleksandrov solution of \eqref{eq:MA} with
$ 0 < \lambda \leq f \leq 1/\lambda$. Then, for any $\Omega' \subset \subset \Omega''\subset\subset \Omega$,
there exists a positive constant
$\rho=\rho(\Omega',\Omega'')$ such that
the following properties hold:
\begin{itemize}
\item[(i)] $S^u_h(x) \subset \Omega''$ for any $x\in \Omega'$, 
$0 \leq h\le 4\rho$.
\item[(ii)] There exist $0<\alpha_1< \alpha_2$ universal such that for all $\mu \in (0,1)$
\begin{equation*} 
 \mu^{\alpha_2} S^u_h(x) \subset S_{\lambda h}^u(x)\subset\mu^{\alpha_1} S_h^u(x)
\end{equation*} 
for any $x\in\Omega'$, $0 \leq2 h \le \rho$.
\item[(iii)] There exists a universal constant $\sigma<1$
such that, if $S_h^u(x)\cap S_h^u(y) \ne \emptyset$, then $S_h^u (y)\subset S_{h/\sigma}^u(x)$ 
for any $x,y\in\Omega'$, 
$0 \leq h \le \sigma\rho$.
\item[(iv)] $\cap_{0 <h \leq \rho} S_h^u(x)=\{x\}$.
\end{itemize} 
\end{proposition}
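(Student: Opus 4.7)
The plan is to transport the known properties of sections of classical Monge--Amp\`ere solutions to the present setting by working with the transformed function $\bar u$ from \eqref{baru} and its normalization $\bar w$ from \eqref{barbarw}. Under the change of variables $q=-D_yc(\cdot,y_0)$ with $y_0=T_u(x_0)$, the section $S^u_h(x_0)$ becomes the convex set $Q^{\bar u}_h(q_0)$; normalizing by the John-ellipsoid affine map $L$ produces $\bar w$ defined on a normalized convex set trapped between $B_1$ and $nB_1$. By \eqref{eq:bij} and the universal bound on $\det D_{xy}c$ in \eqref{cnorm}, the right-hand side of \eqref{MAbarw} stays universally bounded away from $0$ and $\infty$, and by \eqref{MTWC}--\eqref{CB} the coefficients $\C$ still satisfy the MTW condition. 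In this normalized picture the equation is structurally indistinguishable from the classical one, so the estimates developed in \cite{FKM,Liu,LTWC2a} apply.

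\textbf{Step 1: properties (i) and (iv).} Strict $c$-convexity at $x_0$ is the statement that the non-negative function $z\mapsto u(z)+c(z,T_u(x_0))-u(x_0)-c(x_0,T_u(x_0))$ vanishes only at $x_0$. For each fixed $x_0$ this immediately gives $\bigcap_{h>0}S^u_h(x_0)=\{x_0\}$. Uniformity over $x_0\in \overline{\Omega'}$ follows by a compactness argument: if there were sequences $x_k\in\Omega'$, $h_k\downarrow 0$, and $z_k\in S^u_{h_k}(x_k)$ staying a definite distance away from $x_k$, then, using the continuity of $x_0\mapsto T_u(x_0)$ (a consequence of the $C^{1,\alpha}_{\rm loc}$ regularity of \cite{FKM}), we could extract limits $x_k\to x_\infty\in\overline{\Omega'}$, $z_k\to z_\infty\ne x_\infty$, and $T_u(x_k)\to T_u(x_\infty)$, contradicting strict $c$-convexity at $x_\infty$. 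This simultaneously yields a universal $\rho=\rho(\Omega',\Omega'')$ with $S^u_{4\rho}(x_0)\subset \Omega''$ for every $x_0\in\Omega'$, giving (i) and (iv).

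\textbf{Step 2: properties (ii) and (iii).} Fix $x_0\in\Omega'$, $h\le\rho$, and work with $\bar w$ on the normalized convex set $L(Q_h)$. Since $\bar w$ vanishes on $\partial L(Q_h)$ and solves \eqref{MAbarw} with a universally bounded right-hand side, the Aleksandrov maximum principle together with the interior $C^{1,\alpha}$ estimates for \eqref{MAbarw} obtained in \cite{FKM} give universal exponents $\gamma_1\le \gamma_2$ and constants $c_0,C_0$ such that
\[
c_0\,|q'-Lq_0|^{\gamma_2}\ \le\ \bar w(q')\ \le\ C_0\,|q'-Lq_0|^{\gamma_1}
\]
on a universal portion of $L(Q_h)$. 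Rewriting these sandwich bounds as inclusions between the level sets $\{\bar w\le \mu h\}$ and $\mu^\beta\{\bar w\le h\}$ (dilating with respect to $Lq_0$), and then pulling back by $L^{-1}$ and by the bilipschitz diffeomorphism $q\mapsto x$, produces (ii) with suitable $\alpha_1,\alpha_2$ depending on $\gamma_1,\gamma_2$. The engulfing property (iii) is a standard Caffarelli-type consequence: if $S^u_h(x)\cap S^u_h(y)\ne\emptyset$, pick a common point $z$, use (ii) to locate $y$ in a controlled dilation of $S^u_h(x)$, and apply a balanced-sections comparison in the normalized coordinates associated to $x$ to conclude $S^u_h(y)\subset S^u_{h/\sigma}(x)$ for a universal $\sigma<1$.

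\textbf{Main obstacle.} The chief difficulty, compared with the classical Monge--Amp\`ere setting \cite{DepFi,DFS,S}, is the loss of affine invariance of \eqref{eq:MA}: one cannot directly apply Caffarelli's rescaling to sections. What saves the argument is exactly the content of formulas \eqref{MAbarw}--\eqref{MTWC}: the renormalized equation remains uniformly of Monge--Amp\`ere type with MTW-regular coefficients and bounded right-hand side, so the geometric estimates underlying the engulfing and dilation properties carry over to $\bar w$ without change. The delicate point in implementing this is keeping track of the fact that the change of variables $q\mapsto x$, while universally bilipschitz on $\Omega$, is \emph{not} affine, which is why (ii) and (iii) must be formulated and proved first in the normalized $\bar w$-coordinates and only then transferred back to $u$.
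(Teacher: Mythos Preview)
Your Step 1 for (i) and (iv) is essentially the paper's argument: strict $c$-convexity plus a compactness argument to make the modulus uniform on $\overline{\Omega'}$.

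Your Step 2, however, has a genuine gap. You propose to obtain the two-sided polynomial bounds on the \emph{normalized} function $\bar w$ by invoking the interior $C^{1,\alpha}$ theory of \cite{FKM} for the equation \eqref{MAbarw}, on the grounds that ``the renormalized equation remains uniformly of Monge--Amp\`ere type with MTW-regular coefficients.'' The MTW sign condition \eqref{MTWC} is indeed preserved, but the coefficients $\C_{ij,k\ell}$ themselves are \emph{not} universally bounded: by \eqref{CB} they carry the factor $(r_1\cdots r_n)^{2/n}\,r_ir_j/(r_kr_\ell)$, and for a highly eccentric section the ratio $r_n^2/r_1^2$ can be arbitrarily large. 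The $C^{1,\alpha}$ theory in \cite{FKM} requires control of the full cost norm $\normm c\normm$ (in particular a bound on $\A$, not just its MTW sign), so it does not apply uniformly to $\bar w$. This is precisely the manifestation of the loss of affine invariance you flag in your ``Main obstacle'' paragraph; the MTW sign condition alone does not neutralize it.

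The paper sidesteps this by never normalizing in the proof of the proposition. It works in the $\bar u$-coordinates, where $\normm\bar c\normm\approx\normm c\normm$ is controlled, and simply cites \cite{FKM}: Theorem 9.3 for the engulfing property (iii), and Lemma 9.2 for the second inclusion in (ii). For the first inclusion in (ii) it gives a short direct computation: using the dual-norm gradient bound $\|D\bar u\|^*_{Q_{2h}}\le Ch$ from \cite[Lemma 6.3]{FKM}, one checks that $\bar u(\bar s(q-\bar q)+\bar q)\le h/2$ for $q\in Q_h$ and $\bar s$ small universal, i.e.\ $\bar s\,Q_h\subset Q_{h/2}$, and then iterates. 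No $C^{1,\alpha}$ estimate for $\bar w$ is used; the normalization appears only later, in Section 3, where the specific combinations of $\C_{ij,k\ell}$ that arise are shown to be bounded by exploiting \eqref{MTWC} together with \eqref{eq:ri h}--\eqref{eq:ri h2}.
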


\begin{proof}
Points (i) and (iv) follow from the strict $c$-convexity of $u$ shown in \cite[section 7]{FKM},
and the fact that the modulus of strict $c$-convexity is universal (this last fact 
follows by a simple compactness argument in the spirit of \cite[Theorem 1']{CA2}).

Point (iii) corresponds the engulfing property of sections proved in \cite[Theorem 9.3]{FKM}. 

The second inclusion in point (ii) follows from \cite[Lemma 9.2]{FKM}\footnote{To be precise, in \cite{FKM} the dilation is done with respect to the center of the John ellipsoid,
and not with respect to the ``center'' $x$ of the section. However, it is easy to see that the same statement holds also in this case.}. For the first one,
it is enough to show that there exists a universal constant $\bar s \in (0,1)$ such that
\begin{equation}
\label{eq:bars}
\bar s Q^{\bar u}_h(\bar q) \subset Q^{\bar u} _{h/2}(\bar q)
\end{equation}
and then iterate this estimate (here $\bar u$ is defined  as in \eqref{baru},
and $\bar q:=q(x)$). To prove \eqref{eq:bars},
let $E_{2h}$ be the John ellipsoid associated to $Q^{\bar u}_{2h}(\bar q )$, and assume without loss of generality that  that $E_{2h}$ is centered at the origin. By convexity
of the sections in this new variables,
\[
\bar s (Q^{\bar u}_h(\bar q)-\bar q)+\bar q\subset Q^{\bar u}_h(\bar q)\subset  Q^{\bar u}_{2h}(\bar q)\subset nE_{2h} \qquad \forall\,\bar s \in (0,1).
\]
Observe now that, for any $q\in Q^{\bar u}_h(\bar q)$, we have (recall that $\bar u(\bar q)=0$)
\begin{equation}
\label{eq:barubars}
\bar u( \bar s (q-\bar q)+\bar q)=\bar s \int_0^1 D\bar u ((1-t\bar s)\bar q+t \bar s q)\cdot (q-\bar q)\,dt.
\end{equation}
Since $q,\bar q \in n E_{2h} $ we have $q-\bar q \in 2n E_{2h}$,  hence
 \begin{equation}\label{freddo}
 (q-\bar q)/2n \in E_{2h}\subset Q_{2h}(\bar q). 
 \end{equation}
 Moreover, by convexity of $Q_{2h}(\bar q)$,  $(1-t\bar s)\bar q+t \bar s q\in Q_h(\bar q)\subset \tau_0 Q_{2h}(\bar q) $ for some universal $\tau_0<1$ (see \cite[Lemma 9.2]{FKM}). Defining the ``dual norm'' $\|\cdot\|^*_{\mathcal K}$ associated to a convex set $\mathcal K$  as
\[
\| a\|^*_{\mathcal K} :=\sup_{\xi \in \mathcal K} a\cdot \xi,
\]
it follows from \cite[Lemma 6.3]{FKM} that
\begin{equation}\label{gradual}
\|D\bar u(q)\|^*_{Q_{2h}(\bar q)}
=\|-D_q\bar c(q, T_{\bar u}(q))\|^*_{Q^{\bar u}_{2h}(\bar q)}\le C h \quad \forall\, q\in Q^{\bar u}_{h}(\bar q).
\end{equation}
Thus, thanks to \eqref{eq:barubars} and \eqref{freddo} we get
\[
\begin{split}
\bar u( \bar s (q-\bar q)+\bar q)&=2n \bar s \int_0^1 D\bar u ((1-t\bar s)\bar q+t \bar s q)\cdot\frac{ (q-\bar q)}{2n}\,dt\\
&\le 2n \bar s \int_0^1\|D\bar u((1-t\bar s)\bar q+t \bar s q))\|^*_{Q_{2h}(\bar q)}dt\le  2n\bar s C h\le h/2,
\end{split}
\]
provided $\bar s$ is small enough. This proves the desired inclusion.
\end{proof}

As shown for instance in \cite{DFS}, an easy consequence of property (iii) is the following Vitali-type covering theorem. 

\begin{proposition}[Vitali covering theorem]\label{vitali}
Let $u,f,\Omega',\Omega'',\rho,\sigma$
be as in Proposition \eqref{secprop}, let $D$ be a compact subset of $\Omega'$,
and let $\{S_{h_x}(x)\}_{x \in D}$ be a family of sections with $h_x \leq \rho$.
Then we can find a finite number of these sections $\{S_{h_{x_i}}(x_i)\}_{i=1,\dots,m}$ such that $$D \subset \bigcup_{i=1}^m S_{ h_{x_i}}(x_i), \qquad \mbox{with $\{S_{\sigma h_{x_i}}(x_i) \}_{i=1,\dots,m}$ disjoint.}$$
\end{proposition}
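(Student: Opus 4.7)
The plan is to run a Vitali-type greedy selection, phrased in terms of the $\sigma$-shrunk sections, and then use compactness of $D$ to extract a finite subcollection. Processing the points of $D$ in order of decreasing $h_x$ (formally by transfinite induction, since $D$ may be uncountable), I would greedily build a subfamily $\mathcal F \subset D$ as follows: $x$ is added to $\mathcal F$ precisely when $S_{\sigma h_x}(x)$ is disjoint from $S_{\sigma h_{x'}}(x')$ for every $x' \in \mathcal F$ already chosen. By construction, $\{S_{\sigma h_x}(x)\}_{x \in \mathcal F}$ is a pairwise disjoint family, and for every $y \in D \setminus \mathcal F$ there is some $x_i \in \mathcal F$ with $h_{x_i} \ge h_y$ and $S_{\sigma h_y}(y) \cap S_{\sigma h_{x_i}}(x_i) \ne \emptyset$.

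The core of the argument is to show that each $y \in D$ lies in some $S_{h_{x_i}}(x_i)$ with $x_i \in \mathcal F$. If $y \in \mathcal F$ this is trivial; otherwise, pick $x_i$ as above. Since $\sigma h_y \le \sigma h_{x_i}$, monotonicity in the height parameter gives $S_{\sigma h_y}(y) \subset S_{\sigma h_{x_i}}(y)$, so $S_{\sigma h_{x_i}}(y) \cap S_{\sigma h_{x_i}}(x_i) \ne \emptyset$. I would then invoke the engulfing property (iii) of Proposition \ref{secprop} at height $\sigma h_{x_i}$, which is admissible because $\sigma h_{x_i} \le \sigma \rho$ (from $h_{x_i} \le \rho$). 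This yields $S_{\sigma h_{x_i}}(y) \subset S_{(\sigma h_{x_i})/\sigma}(x_i) = S_{h_{x_i}}(x_i)$, and hence $y \in S_{\sigma h_y}(y) \subset S_{h_{x_i}}(x_i)$. Moreover, continuity of $u$ and $c$ together with $\sigma h_y > 0$ imply that $S_{\sigma h_y}(y)$ contains an open neighborhood of $y$, so this neighborhood lies inside $S_{h_{x_i}}(x_i)$ and $y$ belongs to the open interior of $S_{h_{x_i}}(x_i)$.

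Since $\{\mathrm{int}(S_{h_x}(x))\}_{x \in \mathcal F}$ is therefore an open cover of the compact set $D$, a finite subcover $\{S_{h_{x_i}}(x_i)\}_{i=1,\dots,m}$ exists; the corresponding $\sigma$-shrunk sections remain pairwise disjoint, being a subfamily of $\mathcal F$.

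The main obstacle is aligning the height constraint in the engulfing property: (iii) only applies at heights $\le \sigma \rho$, while the hypothesis provides only $h_x \le \rho$. This forces the disjointness condition to be organized at the $\sigma$-shrunk scale, so that engulfing can be invoked at the admissible height $\sigma h_{x_i}$ and produce exactly the $1/\sigma$ dilation needed to recover the original section $S_{h_{x_i}}(x_i)$ in the covering step. A related subtlety is that the greedy must run on the full family $\{S_{h_x}(x)\}_{x \in D}$ rather than on a finite subcover extracted in advance, because the engulfing argument crucially applies to $y$'s \emph{own} section $S_{\sigma h_y}(y)$; routing it through a generic surrogate section containing $y$ would cost an additional factor of $1/\sigma$ and weaken the conclusion to a covering by enlarged sections rather than by the $S_{h_{x_i}}(x_i)$ themselves.
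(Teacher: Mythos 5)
Your engulfing computation is exactly the right core of the argument (the paper itself only cites \cite{DFS} for this proposition): from $h_{x_i}\ge h_y$ and $S_{\sigma h_y}(y)\cap S_{\sigma h_{x_i}}(x_i)\ne\emptyset$ you correctly pass, via monotonicity in the height and Proposition \ref{secprop}(iii) applied at the admissible height $\sigma h_{x_i}\le\sigma\rho$, to $y\in S_{\sigma h_{x_i}}(y)\subset S_{h_{x_i}}(x_i)$, and the final compactness step is fine. The genuine gap is the selection procedure: ``processing the points of $D$ in order of decreasing $h_x$, formally by transfinite induction'' is not a construction you can perform. The assignment $x\mapsto h_x$ is arbitrary (no continuity or semicontinuity), so the set of heights need not have a maximum nor be reverse well-ordered; e.g.\ with heights $h_{x_k}=\rho(1-1/k)$ along a sequence in $D$ there is no largest height and the greedy cannot even start. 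Transfinite induction needs a well-ordering of $D$, and none compatible with nonincreasing $h_x$ exists in general. The standard remedies (pick at each stage a section whose height is at least half the supremum of the remaining ones, or work in dyadic bands of heights) only give $h_{x_i}\ge h_y/2$; then your engulfing step would have to be run at height $2\sigma h_{x_i}$, which may exceed $\sigma\rho$, and in any case it would only yield disjointness of $S_{\sigma' h_{x_i}}(x_i)$ for some smaller universal $\sigma'$ rather than the stated $\sigma$ from Proposition \ref{secprop}(iii). So your argument, as written, rests precisely on the one property (exact height comparison $h_{x_i}\ge h_y$) that the performable selections do not deliver.

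The clean fix keeps your computation verbatim but swaps the order of the two steps, and it also disposes of the objection in your last paragraph. Since each $S_{\sigma h_x}(x)$ contains an open neighborhood of its center $x$ (the defining inequality is strict at $x$ because $\sigma h_x>0$ and $u,c$ are continuous), the interiors of the \emph{shrunk} sections form an open cover of the compact set $D$; extract a finite subcover indexed by $x_1,\dots,x_N$. Among finitely many points you can genuinely order by nonincreasing height and run your greedy, obtaining a subfamily with pairwise disjoint $S_{\sigma h_{x_i}}(x_i)$ such that every discarded $x_j$ satisfies $S_{\sigma h_{x_j}}(x_j)\cap S_{\sigma h_{x_i}}(x_i)\ne\emptyset$ for some kept $x_i$ with $h_{x_i}\ge h_{x_j}$. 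Your engulfing argument then gives $S_{\sigma h_{x_j}}(x_j)\subset S_{\sigma h_{x_i}}(x_j)\subset S_{h_{x_i}}(x_i)$: only the shrunk section of the discarded point must be engulfed, not its full section, so no extra factor $1/\sigma$ appears, and since the shrunk sections of $x_1,\dots,x_N$ cover $D$, the kept full sections cover $D$ with the same constant $\sigma$. This is essentially the route of \cite{DFS}, to which the paper defers.
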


We now want to show that sections at the same height have a comparable shape.
For this, we first recall the following estimate from \cite{FKM}:

\begin{proposition}[Aleksandrov estimates]
\label{prop:Alex}
Let $u,f,\Omega',\Omega'',\rho$ be be as in Proposition \eqref{secprop},
and let $S_h(x)$ be a section of $u$ for some $x \in \Omega'$ and $h\leq \rho$.
Then 
\begin{equation}\label{alest}
|S_h(x_0)|\approx h^{n/2}.
\end{equation}
\end{proposition}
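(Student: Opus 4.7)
The plan is to reduce to the convex-section setting via the $c$-convexity change of variables, normalize affinely, and then extract the volume--height scaling from two-sided Aleksandrov estimates for the MA-type equation.

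First, via the diffeomorphism $q(x)=-D_y c(x,y_0)$ provided by \textbf{(C2)}, the section $S_h(x_0)$ is mapped to the convex set $Q_h(q_0)=\{\bar u\le h\}$. Condition \textbf{(C0)} gives $|\det D_{xy}c|\approx 1$, so $|S_h(x_0)|\approx |Q_h(q_0)|$, and on $Q_h(q_0)$ the function $\bar u$ from \eqref{baru} solves \eqref{MAbaru} with right-hand side $g\approx f\in[\lambda,1/\lambda]$. It is therefore enough to prove $|Q_h(q_0)|\approx h^{n/2}$.

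Next I would normalize $Q_h(q_0)$ by the affine map $L$ sending its John ellipsoid to $B_1$, so that $B_1\subset L(Q_h)\subset nB_1$ and $|L(Q_h)|\approx 1$. In the new variables, the rescaled function $\bar w$ from \eqref{barbarw} vanishes at the interior point $L(q_0)$ (since $\bar u(q_0)=0$), takes the constant value $H:=(\det L)^{2/n}h$ on $\partial L(Q_h)$, and solves \eqref{MAbarw} with $g(L^{-1}\,\cdot\,)\in[\lambda',1/\lambda']$. Since $|Q_h|\approx |\det L|^{-1}$ and $(\det L)^{2/n}h=H$, the desired estimate $|Q_h|\approx h^{n/2}$ is equivalent to the two-sided bound $H\approx 1$ on the boundary value of $\bar w$ on the normalized convex set.

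The core step is therefore the bound $c\le H\le C$ for $\bar c$-convex solutions of \eqref{MAbarw} on a normalized convex set. Both inequalities are Aleksandrov-type maximum principles: heuristically, the Lebesgue measure of the $\bar c$-subdifferential image of $\bar w$ on $L(Q_h)$ admits two comparisons. By the equation and the Jacobian bounds of \textbf{(C0)}, it is $\approx \int g \approx |L(Q_h)|\approx 1$; by an Aleksandrov-type estimate modeled on the classical convex case, it is $\approx H^n$, because $\bar w$ vanishes at an interior point of a normalized convex set and equals $H$ on the boundary. Matching the two yields $H^n\approx 1$, hence $H\approx 1$. The main obstacle is justifying the second comparison in the present $\bar c$-convex framework: $\bar w$ is not convex, and $\det(D^2\bar w-\C)$ is not the standard Monge--Amp\`ere operator, so the classical Aleksandrov argument needs to be adapted to $\bar c$-convex functions and to the nonlinear gradient term $\C$. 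The MTW-type condition \eqref{MTWC} is used essentially to preserve the convexity of sections and the duality inequalities that underlie the Aleksandrov estimates; this adaptation is exactly what is carried out in \cite{FKM}, from which the proposition is recalled.
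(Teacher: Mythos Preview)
Your proposal is correct and in fact more detailed than what the paper does: the paper gives no proof of this proposition but simply recalls it from \cite{FKM}. Your sketch---pass to the convex section $Q_h$ via \textbf{(C2)}, normalize with the John map $L$, reduce to $H=(\det L)^{2/n}h\approx 1$, and obtain this from two-sided Aleksandrov-type estimates for $\bar c$-convex functions---is the standard route and is precisely the content of the relevant results in \cite{FKM}; you also correctly flag that the nontrivial point is adapting the classical Aleksandrov maximum principle to the $\bar c$-convex setting, and that this is where \textbf{(C1)} (via the convexity of sections) enters.
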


\begin{remark}\label{rmk:gradientbound}
Estimates \eqref{gradual} and \eqref{alest} have the following important consequence: consider the function $\bar u$ defined in \eqref{baru}, fix one of its sections $Q_{h}$ such that $Q_{2h}\subset \Omega''$ with $\Omega''$ as above, normalize $Q_h$
using its corresponding John's transformation $L$, and define $\bar w$ as in \eqref{barbarw}.
Since $(\det L)^{-2/n}\approx |E_h|\approx \osc_{Q_{h}}\bar u\approx \osc_{Q_{2h}} \bar u $ (by \eqref{alest}) and $E_h\subset Q_{2h}$, we deduce the universal gradient bound
\begin{equation}\label{grad}
\begin{split}
\sup_{ L(Q_h)}|D \bar w |&=  (\det L)^{2/n}\sup_{Q_h}|(L^*)^{-1}D \bar u |\\
&\le C(\det L)^{2/n} \sup_{Q_h}\|D \bar u\|^*_{E_h}\\
&\le C(\det L)^{2/n} \sup_{Q_h}\|D \bar u\|^*_{Q_{2h}}\\
&\le C(\det L)^{2/n} \osc_{Q_{2h}} \bar u\le C.
\end{split}
\end{equation}
\end{remark}

\begin{lemma}\label{altezzecomp}
Let $u,f,\Omega',\Omega'',\rho$
be as in Proposition \eqref{secprop}. Then for any $0\leq h\leq \rho$
there exist two radii $r=r(h)$ and $R=R(h)$ such that, for every $x_0\in \Omega'$, if $E$ is the John ellipsoid associated to $S^u _h(x_0)$, then, up to  a translation,
\[
B_r(0)\subset E\subset B_R(0).
\]
\end{lemma}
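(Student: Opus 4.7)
The plan is to pass to the convex $\bar u$-picture and then bound the semi-axes of the John ellipsoid above by the ambient diameter and below by the Aleksandrov volume estimate. First I would set $y_0:=T_u(x_0)$ and work with the diffeomorphism $x \mapsto q:=-D_yc(x,y_0)$, under which $S_h^u(x_0)$ is sent to the \emph{convex} section $Q_h^{\bar u}(q_0)$. By {\bf (C0)} and {\bf (C2)} this map is $C^2$ with Jacobian bounded above and below by universal constants, so it is enough to prove the two-sided radial bound for the John ellipsoid $E$ of $Q_h^{\bar u}(q_0)$: the bounds transfer back to the John ellipsoid of $S_h^u(x_0)$ up to a universal factor.

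The upper bound is immediate from Proposition~\ref{secprop}(i): since $S_h^u(x_0) \subset \Omega''$ for every $x_0 \in \Omega'$ and the diffeomorphism sends $\Omega''$ into a bounded set, one has $E \subset Q_h^{\bar u}(q_0) \subset B_R(z)$ for some center $z$ and some radius $R$ depending only on $\Omega''$. For the lower bound, denote the semi-axes of $E$ by $r_1 \le \cdots \le r_n$. Proposition~\ref{prop:Alex} yields $|S_h^u(x_0)| \approx h^{n/2}$, and the bounded Jacobian transfers this to $|Q_h^{\bar u}(q_0)| \approx h^{n/2}$. Combining with $E \subset Q_h^{\bar u}(q_0) \subset nE$ forces $|E| \approx h^{n/2}$, i.e.\ $r_1 \cdots r_n \approx h^{n/2}$. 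Since each $r_i \le R$ by the previous step, this gives
\[
r_1 \;\ge\; c\, h^{n/2}/R^{n-1} \;=:\; r,
\]
so that $B_r(z) \subset E$. A final translation places $z$ at the origin.

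There is no real obstacle here, beyond the care needed to transfer John's ellipsoid from $S_h^u(x_0)$ to the convex set $Q_h^{\bar u}(q_0)$ via the $\bar u$ change of variables, which is routine under {\bf (C0)}--{\bf (C2)}. The crucial observation, and the only nontrivial input, is that once the axes are bounded above by a constant $R$ independent of $x_0$ and the total volume of $E$ is pinned to $h^{n/2}$, the smallest axis is automatically bounded below by a positive quantity depending only on $h$; both the upper bound $R$ and the lower bound $r$ produced in this way are uniform in $x_0 \in \Omega'$, as required by the statement.
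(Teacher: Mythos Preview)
Your proof is correct and follows essentially the same approach as the paper: bound the largest axis of the John ellipsoid by the ambient diameter via Proposition~\ref{secprop}(i), then use the Aleksandrov volume estimate $|E|\approx h^{n/2}$ together with $r_1\cdots r_n\le r_1 R^{n-1}$ to force a lower bound on the smallest axis. The paper's own argument is slightly terser in that it works directly with $E$ without explicitly passing to the $Q_h^{\bar u}$ picture, but the content is the same.
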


\begin{proof} Let $r_1\le \ldots\le r_n$ be the axes of $E$. Since
$r_n\leq {\rm diam} (E)\le C$ and by \eqref{alest} 
$$
h^{n/2}
\approx |E|\approx r_1\cdot \ldots\cdot r_n \leq {\rm diam}(E)^{n-1}r_1,
$$
we obtain the desired lower bound on \(r_1\).
\end{proof}

Obviously analogous properties holds for the section $Q_h^{\bar u}(q_0)$. 
\begin{remark}
Notice that Proposition \ref{secprop}(ii) applied to the (convex) sections of $\bar u$ implies the following: given $x \in \Omega''$ and $h \leq \rho$,
let $r_1\leq \ldots\leq r_n$ denote the axes
of the John ellipsoid associated to $Q_h(x)$. Then
\begin{equation}
\label{eq:inclusions}
r_n\le Cr_1^{\alpha_3}
\end{equation}
for some universal exponent $\alpha_3<1$ and a constant $C(\Omega',\Omega'')$.

To see this just normalize $Q_\rho(x)$ using $L$ and notice that, by \cite[Theorem 6.11]{FKM} , ${\rm dist }\bigl(x,\partial \bigl( L(Q_\rho(x)\bigr)\bigr)\ge 1/C$ for some universal constant $C$. Thus, up to enlarge  $C$,
$$
\Bigg(\frac{h}{C\rho}\Bigg)^{\alpha_2}B_1(x)\subset L(Q_{h})\subset \Bigg(\frac{Ch}{\rho}\Bigg)^{\alpha_1}B_1(x).
$$
Since, by Lemma \ref{altezzecomp}, sections of height $\rho$ have bounded eccentricity (i.e., $|L|\approx C(\Omega',\Omega'')$), this implies the claim with $\alpha_3:=\alpha_2/\alpha_1$.

We now observe that  $\aalpha(Q_h)\approx r_n^2/(r_1\ldots r_n)^{2/n}$, from which we deduce 
 that 
$$
r_1^2\le C\frac{r_n^2} {\aalpha(Q_h)}.
$$
In particular, this and \eqref{eq:inclusions} imply 
$$
r_n \leq C r_1^{\a_3} \leq C\frac{r_n^{\a_3}} {\aalpha(Q_h)^{\a_3/2}},
$$
that is
$$
r_n \leq \frac{C} {\aalpha(S_h)^{\beta}},\qquad \text{with} \quad\beta:=\frac{\alpha_3}{2-2\alpha_3}.
$$
Hence, since $S_h$ is linked to $Q_h$ by a diffeomorphism with universal $C^1$ norm,
and ${\rm diam}(S_h)\le {\rm diam}(nE_{h})=2 n r_n$,
we get
\begin{equation}\label{diamsec}
{\rm diam}(S_h)\le \frac{\bar C} {\aalpha(S_h)^{\beta}}, \qquad \beta,\bar C>0 \text{ universal}.
\end{equation}
\end{remark}

\section{$W^{2,1+\kappa}$ estimates}
Applying first a large dilation to $\Omega$ we can assume that $B(0,1) \subset \Omega$,
and by a standard covering argument (see for instance \cite[Section 3]{DepFi})
it suffices to prove the $W^{2,1+\kappa}$ regularity of $u$ inside $B(0,1/2)$.
Also, by an
approximation argument\footnote{To approximate our solution with smooth ones, it suffices to regularize the data and then:\\
- either apply \cite[Remark 4.1]{LTWC2a}
(notice that, by Proposition \ref{secprop}(iv) and \cite[Theorem 8.2]{FKM}, $u$ is strictly $c$-convex and of class $C^1$ inside $\Omega$);\\
- or approximate our cost $c$ with cost functions satisfying
the strong version of {\bf (C1)} and apply \cite[Theorem 1.1]{LTWC2a}.},
 it is enough to prove the result when $u \in C^2$.
Hence Theorem \ref{w21eps} is a consequence of the following:

\begin{theorem}\label{W21epsloc}Let $u\in C^2$ be a $c$-convex solution of \eqref{eq:MA} with
$\Omega \supset B(0,1)$.
Then there exist universal constants $\kappa$ and $C$ such that
\begin{equation}\label{eqW21epsloc}
\int_{B(0,1/2)}|D^2 u|^{1+\kappa}\le C.
\end{equation} 
\end{theorem}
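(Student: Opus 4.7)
The plan is to prove a power-type decay of the distribution function of $|D^2u|$, namely to find universal constants $\kappa,C>0$ such that
\[
\bigl|\{x \in B(0,1/2):\, |D^2u(x)|>M\}\bigr| \le C\, M^{-(1+\kappa)} \qquad \forall \,M\ge 1,
\]
since integrating by layers then yields \eqref{eqW21epsloc}. By Lemma \ref{eccen} and \eqref{hessubar}, controlling $|D^2u(x)|$ at a point is the same as controlling the normalized size $\aalpha(S_h(x))$ of sufficiently small sections based at $x$; the task therefore becomes geometric, namely to estimate the measure of the set of points which generate very eccentric sections at some scale.

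The core ingredient is a quantitative density estimate of the following form: there exist universal $N_0\gg 1$ and $\tau,\eta\in(0,1)$ such that, for every section $S_h(x_0)\subset\subset \Omega$ with $\aalpha(S_h(x_0))=:N\ge N_0$,
\[
\bigl|\{x\in S_{h/2}(x_0):\,|D^2u(x)|\ge \eta^{-1} N\}\bigr| \le (1-\tau)\,|S_{h/2}(x_0)|.
\]
To prove it I would pass to the convex picture via \eqref{baru}, normalize $Q_h(q_0)$ by its John transformation $L$, and study $\bar w$ on $L(Q_h)$; by Remark \ref{rmk:gradientbound} we have $|D\bar w|\le C$ universally, so the matrix $\C(q',D\bar w)$ in \eqref{Cijst} is universally bounded, and \eqref{MTWC} preserves the degenerate ellipticity. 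One then adapts the $W^{2,1+\kappa}$ estimate of \cite{DFS} to the perturbed equation \eqref{MAbarw} with bounded data on a normalized domain, obtaining $\int_{\frac12 L(Q_h)}|D^2\bar w|^{1+\kappa}\,dq'\le C$, and Markov's inequality gives a definite fraction of $L(Q_h)$ where $|D^2\bar w|\le C$. Unwinding the normalization through $D^2\bar w(q')=(\det L)^{2/n}(L^*)^{-1}D^2\bar u(q)\,L^{-1}$ together with \eqref{normalizedsize} and Lemma \ref{eccen} turns this into the claimed density estimate on $S_{h/2}(x_0)$.

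Once the density estimate is in hand the iteration follows a stopping-time argument in the spirit of \cite{DFS}. For each point where $|D^2u|>M$, Lemma \ref{eccen} provides a section witnessing $\aalpha\approx M$; Proposition \ref{secprop}(ii)--(iii) controls the shape of these sections at nearby scales, Lemma \ref{altezzecomp} and \eqref{diamsec} keep the geometry manageable, and Proposition \ref{vitali} produces a disjoint subfamily. The density lemma then says that replacing $M$ by $\eta^{-1}N_0 M$ shrinks the bad set by a factor $(1-\tau)$, producing the sought geometric decay. The main obstacle, as flagged by the authors, is the non-affine invariance: after normalization the equation for $\bar w$ still carries the lower-order term $\C(q',D\bar w)$ which is absent in classical Monge--Amp\`ere. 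Its saving features are that (i) $\C$ vanishes to second order at $p=0$ by \eqref{0inunpunto}--\eqref{Cijst}, (ii) $|D\bar w|$ is universally bounded, so $\|\C\|_\infty$ is controlled, and (iii) the MTW inequality \eqref{MTWC} survives the normalization. Adapting the $W^{2,1+\kappa}$ machinery of \cite{DFS} to the perturbed equation using exactly these three ingredients is where the technical heart of the proof lies; the Vitali iteration and the passage from $\bar w$ back to $u$ are, by comparison, bookkeeping.
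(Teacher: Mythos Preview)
Your overall architecture (normalize a section, obtain a density estimate, iterate via Vitali) matches the paper, but the step you identify as the ``technical heart'' contains a real gap.

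You claim that after normalization ``the matrix $\C(q',D\bar w)$ is universally bounded'' and then propose to invoke the $W^{2,1+\kappa}$ machinery of \cite{DFS} on \eqref{MAbarw}. Neither part works. First, look at \eqref{CB}: $\C_{ij,k\ell}=(r_1\cdots r_n)^{2/n}\,\dfrac{r_ir_j}{r_kr_\ell}\,\B_{ij,k\ell}$. Using $(r_1\cdots r_n)^{2/n}\approx h$ and the Lipschitz bound $h/r_k\le C$ only controls the terms with $\{k,\ell\}\cap\{i,j\}\ne\emptyset$; the cross terms $\C_{ii,k\ell}$ with $k,\ell\ne i$ carry the factor $r_i^2/(r_kr_\ell)$, which scales like the eccentricity $\aalpha(S_h)$ and is exactly the quantity you are trying to show is not too large. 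The MTW inequality \eqref{MTWC} gives these terms a sign, not a bound. Second, even if $\C$ were bounded, appealing to a $W^{2,1+\kappa}$ estimate for solutions of $\det(D^2\bar w-\C(q',D\bar w))=g$ on a normalized domain is circular: that is precisely Theorem \ref{W21epsloc} for another cost satisfying {\bf (C0)}--{\bf (C2)}, and \cite{DFS} treats only $\A\equiv 0$.

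What the paper does instead is much more modest at the normalized level: it proves only an $L^1$ bound, Lemma \ref{lemmaL^1}, for the full matrix $\partial_{ij}\bar w-\C_{ij,k\ell}\partial_k\bar w\,\partial_\ell\bar w$ on $L(Q_h)$. The point is that this matrix is nonnegative by $c$-convexity, so it suffices to bound its trace; integrating $\Delta\bar w$ by the divergence theorem and \eqref{grad} handles $\sum_i\partial_{ii}\bar w$, while the potentially large part $\sum_i\sum_{k,\ell\ne i}\C_{ii,k\ell}\partial_k\bar w\,\partial_\ell\bar w$ is \emph{nonnegative} by \eqref{MTWC} (take $\xi=e_i$) and hence helps rather than hurts, and the remaining terms are bounded by the scaling $(r_1\cdots r_n)^{2/n}\approx h$, $h/r_k\le C$. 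Chebyshev then gives Lemma \ref{lem:hessian}, which combined with \eqref{MAbarw} yields the two-sided bound \eqref{mis}; undoing the normalization produces \eqref{main}. The $\kappa$ is generated only at the very end, by the level-set iteration \eqref{stimedk}, not at the normalized scale. In short: replace your black-box call to \cite{DFS} by the explicit $L^1$ computation of Lemma \ref{lemmaL^1}, where MTW is used for sign rather than smallness, and the rest of your outline goes through.
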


We start with the following lemma:
\begin{lemma}\label{lemmaL^1}Let $u$ be as above, $x_0\in B(0,3/4)$,
and $h>0$ such that $S_{2h}(x_0)\subset B(0,5/6)$. Consider the function $\bar u$ as in \eqref{baru}, its section $Q_h=Q_h(q_0)$ with $q_0:=q(x_0)$, and (up to a rotation) let
$E_h=\left\{\sum x_i^2/r_i^2\le 1\right\}$ be the John ellipsoid associated to $Q_h(q_0)$. Denote by $L$ be the affine transformation that normalizes $Q_h$, and define $\bar w$ and $\C_{ij,k\ell}$ as in \eqref{barbarw} and \eqref{CB} respectively. Then
\begin{equation}\label{L^1}
\int_{L(Q_h)}\big| \partial_{ij} \bar w-\C_{ij,k\ell} \partial_k \bar w \, \partial_\ell \bar w\big| \le C
\end{equation}
for some universal constant $C$.
\end{lemma}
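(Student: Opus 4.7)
The matrix $M^{\bar w}(q'):=D^2\bar w(q')-\C(q',D\bar w(q'))$ turns out to be pointwise positive semidefinite: since $u$ (hence $\bar u$) is $c$-convex, equation \eqref{MAbaru} is degenerate elliptic on $\bar c$-convex functions, giving $M^{\bar u}:=D^2\bar u-\B(q,D\bar u)\ge 0$, and a direct computation combining \eqref{barbarw}, \eqref{biist} and \eqref{CB} shows
\begin{equation*}
\C_{ij}(q',D\bar w(q'))=(\det L)^{2/n}r_ir_j\,\B_{ij}(q,D\bar u(q)),\qquad q=L^{-1}q',
\end{equation*}
which combined with $\partial_{q'_iq'_j}\bar w=(\det L)^{2/n}r_ir_j\,\partial_{q_iq_j}\bar u$ yields the matrix congruence $M^{\bar w}(q')=(\det L)^{2/n}L^{-1}M^{\bar u}(q)L^{-1}$, preserving positivity. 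Once $M^{\bar w}\ge 0$, the inequality $|M_{ij}|\le (M_{ii}+M_{jj})/2$ for PSD matrices reduces the lemma to bounding $\int_{L(Q_h)}M^{\bar w}_{ii}\,dq'$ for each $i$.

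Via the congruence and the change of variables $q'=Lq$, $\int M^{\bar w}_{ii}=(\det L)^{1+2/n}r_i^2\int_{Q_h}M^{\bar u}_{ii}\,dq$; using \eqref{alest} to get $(\det L)^{1+2/n}\approx h^{-n/2-1}$, the target becomes the sharp estimate $\int_{Q_h}M^{\bar u}_{ii}\,dq\le C h^{n/2+1}/r_i^2$. Splitting $M^{\bar u}_{ii}=\partial_{ii}\bar u-\B_{ii}(q,D\bar u)$, I would handle the leading term by Fubini in direction $e_i$ and the divergence theorem:
\begin{equation*}
\int_{Q_h}\partial_{ii}\bar u\,dq=\int_{\partial Q_h}\partial_i\bar u\,\nu_i\,d\sigma\le \sup_{\partial Q_h}|\partial_i\bar u|\cdot \bigl|\mathrm{proj}_{e_i^\perp}Q_h\bigr|\le \frac{Ch}{r_i}\prod_{j\ne i}r_j\approx \frac{Ch^{n/2+1}}{r_i^2},
\end{equation*}
using the gradient bound $|\partial_i\bar u|\le Ch/r_i$ from \eqref{gradual} (testing against $r_ie_i\in E_{2h}$) and the inclusion $Q_h\subset nE_h$, together with $\prod r_j\approx h^{n/2}$ from \eqref{alest}.

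The $\B$-term is lower order: \eqref{0inunpunto} and \eqref{biist} give $|\B_{ii}(q,D\bar u)|\le C|D\bar u|^2$, and an $L^2$-bound on $D\bar u$ from the identity $\int_{Q_h}|D\bar u|^2\,dq=\int_{Q_h}(h-\bar u)\Delta\bar u\,dq$ (integration by parts, since $\bar u\equiv h$ on $\partial Q_h$) should absorb it into the main term. The hardest step will be making this absorption uniform in the index $i$: the naive estimate $|D\bar u|\le Ch/r_1$ is too lossy for highly eccentric sections, and one has to bring in the diameter bound \eqref{diamsec} (which rules out pathological eccentricities) together with the quadratic vanishing of $\B_{ii}$ at $p=0$ encoded in \eqref{0inunpunto} to close the argument uniformly in $i$.
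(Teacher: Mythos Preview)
Your reduction to the diagonal via positive semidefiniteness is correct and matches the paper, and your treatment of $\int_{Q_h}\partial_{ii}\bar u$ via the directional divergence theorem together with the dual-norm bound $|\partial_i\bar u|\le Ch/r_i$ is fine. (The paper instead stays in the normalized set $L(Q_h)$, where $|D\bar w|\le C$ and $\mathcal H^{n-1}\bigl(\partial L(Q_h)\bigr)\le C$ make this step a one-liner; your route is equivalent after undoing the scaling.)

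The gap is in the $\B$-term. You use only the isotropic bound $|\B_{ii}(q,D\bar u)|\le C|D\bar u|^2$, and you already flag that the naive $L^\infty$ estimate $|D\bar u|\le Ch/r_1$ is too weak. The integration-by-parts identity you propose gives at best
\[
\int_{Q_h}|D\bar u|^2\le h\int_{Q_h}\Delta\bar u\le Ch\sum_i \frac{h^{n/2+1}}{r_i^2}\approx \frac{Ch^{n/2+2}}{r_1^2},
\]
whereas the target (for the index $i=n$) is $Ch^{n/2+1}/r_n^2$. Matching these forces $hr_n^2/r_1^2\le C$; with $h\le Cr_1$ and \eqref{eq:inclusions} this becomes $r_1^{2\alpha_3-1}\le C$, which fails for small $r_1$ unless $\alpha_3\ge 1/2$, and there is no such guarantee. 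Neither \eqref{diamsec} nor the quadratic vanishing \eqref{0inunpunto} adds information beyond $|\B_{ii}|\le C|D\bar u|^2$, so the argument as written does not close.

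The missing ingredient is condition \textbf{(C1)}, which you never invoke. Decompose
\[
\B_{ii,k\ell}\,\partial_k\bar u\,\partial_\ell\bar u
=\sum_{k,\ell\ne i}\B_{ii,k\ell}\,\partial_k\bar u\,\partial_\ell\bar u
+2\sum_{k\ne i}\B_{ii,ik}\,\partial_i\bar u\,\partial_k\bar u
+\B_{ii,ii}(\partial_i\bar u)^2.
\]
By \textbf{(C1)} (applied with $\xi=e_i$ and $\eta$ the vector with $i$-th entry zero) the first sum is \emph{nonnegative}, hence contributes the good sign to $M^{\bar u}_{ii}=\partial_{ii}\bar u-\B_{ii}$ and can be dropped from the upper bound. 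The remaining terms all carry a factor $\partial_i\bar u$, so $-\B_{ii}\le C|\partial_i\bar u|\,|D\bar u|\le C(h/r_i)(h/r_1)$; integrated over $Q_h$ and using $h/r_1\le C$, $r_i\le C$, this gives exactly $Ch^{n/2+1}/r_i^2$. This is precisely what the paper does (in the normalized variables the same splitting appears in the proof of the pointwise lower bound on $\sum_i\C_{ii,k\ell}\partial_k\bar w\,\partial_\ell\bar w$, with the scaling \eqref{CB} and the bounds $(r_1\cdots r_n)^{2/n}\approx h$, $h/r_k\le C$ replacing the radii factors). Once you use \textbf{(C1)} in this way your argument in the $Q_h$ variables goes through; without it, it does not.
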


\begin{proof}

Since, by the $c$-convexity of $u$ (which is preserved under change of variables),
the matrix $\bigl(\partial_{ij} \bar w
-\C_{ij,k\ell} \partial_k \bar w \, \partial_\ell \bar w\bigr)_{i,j=1,\ldots,n}$ is non-negative definite, it is enough to estimate
\[
\int_{L(Q_h)}\sum_{i=1}^n\big(\partial_{ii} \bar w-C_{ii,st} \partial_k\bar  w \, \partial_l\bar  w\big)
\]
from above.

Using the bounds $\mathcal H^{n-1}\bigl(\partial\bigl( L(Q_h)\bigr)\bigr)\le C(n)$
(since $L(Q_h)$ is a normalized convex set) and $|D\bar w|\le C$ (see \eqref{grad}),
we see that first term is controlled from above by
\begin{equation}\label{1}
\int_{L(Q_h)} \Delta \bar w = \int_{\partial\left( L(Q_h)\right)} D\bar w \cdot \nu \le \mathcal H^{n-1}\bigl(\partial\bigl( L(Q_h)\bigr)\bigr) \sup_{ L(Q_h)}|D \bar w |\le C.
\end{equation}
For the second term, we claim the following: there exists a universal constant $C$ such that
\begin{equation}\label{2}
\inf_{L(Q_h)}\sum_{i=1}^n  \C_{ii,k\ell} \partial_k \bar w \, \partial_\ell \bar w \ge -C
\end{equation}
To see this we write
$$
\sum_{i=1}^n  \C_{ii,k\ell} \partial_k \bar w \, \partial_\ell \bar w 
=\sum_{i=1}^n  \sum_{k,\ell\ne i} \C_{ii,k\ell} \partial_k \bar w \, \partial_\ell \bar w+2\sum_{i=1}^n\sum_{k\ne i}\C_{ii,ik} \partial_i \bar w \, \partial_k \bar w+\sum_{i=1}^n \C_{ii,ii} \partial_i \bar w \, \partial_i\bar w.
$$
We first observe that,
since for any $i=1,\ldots,n$ the vector $(\partial_1 \bar w,\dots,\partial_{i-1}\bar w,0,\partial_{i+1} \bar w,\dots,\partial_n\bar w) $ is orthogonal to coordinate vector $e_i$,
the first term in the right hand side is non-negative by condition {\bf (C1)}.

Concerning the second and the third term, taking into account the definition of $\C_{ij,k\ell}$ in \eqref{Cijst} we can rewrite them as
\begin{equation}
\label{eq:23term}
2\sum_{i=1}^n\sum_{k\ne i}(r_1 \dots r_n)^{2/n}\frac {r_i}{r_k} \B_{ii,ik} \partial_i \bar w \, \partial_k \bar w+\sum_{i=1}^n (r_1 \dots r_n)^{2/n}\B_{ii,ii} \partial_i \bar w \, \partial_i\bar w.
\end{equation}
Observe that, by \eqref{alest},
\begin{equation}
\label{eq:ri h}
(r_1 \dots r_n)^{2/n}\approx h.
\end{equation}
In addition, by the Lipschitz regularity of $\bar u$ (which is simply a consequence 
of the fact that
$u$ is locally Lipschitz inside $\Omega$),
\begin{equation}
\label{eq:ri h2}
h/r_k\leq C \qquad \forall\,k=1,\ldots,n.
\end{equation}
Since  $\|D\bar w\|_\infty\leq C$ (see \eqref{grad}) and the size of $\B$ is controlled by $\normm \bar c\normm \approx \normm  c\normm$,
by \eqref{eq:ri h} and \eqref{eq:ri h2}
we see that the expression in \eqref{eq:23term} is universally bounded.

This proves \eqref{2}, which combined with \eqref{1} concludes the proof.
\end{proof}

\begin{lemma}
\label{lem:hessian}With the same notation and  hypotheses  as in Lemma \ref{L^1}, let $\sigma$ be as in Proposition \ref{vitali}. Then there exists a universal constant $C$ such that 
\begin{equation}\label{mis}
\big|\{\tilde q\in L(Q_{\sigma h}):   {\rm Id}/C \le\partial_{ij} \bar w-\C_{ij,k\ell} \partial_k \bar w \, \partial_\ell \bar w  \le C\,{\rm  Id} \}\big|\ge \frac{1}{C}.
\end{equation}
\end{lemma}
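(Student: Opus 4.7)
The plan is to combine the $L^1$ bound of Lemma \ref{lemmaL^1} with a two-sided bound on the determinant of the non-negative definite matrix
$$
M(q'):=\bigl(\partial_{ij}\bar w(q')-\C_{ij,k\ell}(q',D\bar w(q'))\,\partial_k\bar w(q')\,\partial_\ell\bar w(q')\bigr)_{i,j=1}^n.
$$
The $\bar c$-convexity of $\bar w$ gives $M\ge 0$, and from \eqref{MAbarw}, the expression of $g$ in \eqref{eq:bij}, together with the hypotheses $\lambda\le f\le 1/\lambda$ and $\|\log|\det D_{xy}c|\|_{\infty}\le\normm c\normm$, one immediately obtains $1/C\le \det M(q')\le C$ for all $q'\in L(Q_h)$ and some universal $C$.

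Next, since $M\ge 0$, its operator norm $\lambda_{\max}(M)$ is comparable to its trace, so Lemma \ref{lemmaL^1} combined with Chebyshev's inequality yields
$$
\big|\{q'\in L(Q_h):\lambda_{\max}(M(q'))\ge K\}\big|\le \frac{C}{K}\qquad \forall\,K>0.
$$
In parallel, I would record that $|L(Q_{\sigma h})|$ is universally positive: indeed Proposition \ref{prop:Alex} (transported to $q$-space via the diffeomorphism $x\mapsto q(x)$) gives $|Q_{\sigma h}|/|Q_h|\approx \sigma^{n/2}$, and since $|L(Q_h)|\approx 1$ by normalization, we get $|L(Q_{\sigma h})|\ge c_0$ with $c_0$ universal. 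Choosing $K$ universal and large enough that $C/K\le c_0/2$, the set
$$
A:=L(Q_{\sigma h})\cap\{\lambda_{\max}(M)\le K\}
$$
satisfies $|A|\ge c_0/2\ge 1/C$.

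Finally, on $A$ the definition of $A$ gives $M\le K\,\mathrm{Id}$; combined with $\det M\ge 1/C$ and the non-negativity of the eigenvalues, this forces
$$
\lambda_{\min}(M)\ge \frac{\det M}{\lambda_{\max}(M)^{n-1}}\ge \frac{1}{CK^{n-1}},
$$
producing the two-sided bound $\mathrm{Id}/C'\le M\le C'\,\mathrm{Id}$ on $A$ for some universal $C'$, which is precisely \eqref{mis}. I do not foresee any real obstacle: the argument is a short combination of non-negative definiteness of $M$, the $L^1$ bound on its trace from Lemma \ref{lemmaL^1}, the determinant lower bound from the equation, and universal control on $|L(Q_{\sigma h})|$. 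The only point that needs care is that after the affine normalization $L$ the right-hand side $g\circ L^{-1}$ and the structural constants of $\bar c$ remain universally bounded, but this has already been recorded around \eqref{eq:bij}, \eqref{CB}, and \eqref{MTWC}.
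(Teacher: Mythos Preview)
Your proof is correct and follows essentially the same route as the paper: use the $L^1$ bound on $\mathrm{tr}\,M$ from Lemma \ref{lemmaL^1} together with Chebyshev to find a large subset of $L(Q_{\sigma h})$ where $M$ is bounded above, then use the two-sided determinant bound from \eqref{MAbarw} to upgrade this to a two-sided eigenvalue bound. The only cosmetic difference is that you justify $|L(Q_{\sigma h})|\gtrsim 1$ via the Aleksandrov volume estimate (Proposition \ref{prop:Alex}) while the paper invokes Proposition \ref{secprop}(ii); both arguments are equivalent here since $\sigma$ is universal.
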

\begin{proof}
Since $\sigma$ is universal and $L(Q_h)$ is normalized, by
Proposition \ref{secprop}(ii) we get
\[
|L(Q_{\sigma h})|\approx |L(Q_h)|\approx 1.
\]
So, using Lemma \ref{lemmaL^1} and Chebychev inequality,
we deduce the existence of a universal constant $C$ such that
\[
|\{\tilde q \in L(Q_{\sigma h}): \partial_{ij} \bar w-\C_{ij,k\ell} \partial_k \bar w \, \partial_\ell \bar w\le C\,{\rm Id} \}|\ge \frac{1}{C}.
\]
Since by \eqref{MAbarw} the product of the eigenvalues of the matrix
$\bigl(\partial_{ij} \bar w-\C_{ij,k\ell} \partial_k \bar w\bigr)_{i,j=1,\ldots,n}$ is of order one, whenever the eigenvalues
are universally bounded from above, they also have to be universally bounded also from below.
Hence, up to enlarging the value of $C$, this proves \eqref{mis}.
\end{proof}

\begin{remark}\label{undoscaling}
Recalling the definition \eqref{normalizedsize} of  $\aalpha( Q_h)=\aalpha(S_h)$, 
we can rewrite both \eqref{L^1} and \eqref{mis} in terms of
$\bar u$  and $Q_h=Q_h(q_0)$, obtaining that
\begin{equation*}
\int_{Q_h}\big| \partial_{ij} \bar u -\B_{ij,k\ell} \partial_k \bar u \, \partial_l \bar u\big| \le C\aalpha(Q_h)\Big|\big\{x\in Q_{\sigma h}:   \aalpha(Q_h)/C \le \big|\partial_{ij} \bar u-\B_{ij,k\ell} \partial_k \bar u \, \partial_l \bar u \big|\le C\aalpha(Q_h) \big\}\Big|
\end{equation*}
(see for instance the proof of \cite[Lemma 3.2]{DFS}).
In terms of $u$, this estimate becomes
\begin{equation}\label{main}
\int_{S_h}|D^2 u-\A(x,Du)| \le C_0\aalpha(S_h)\Big|\big\{x\in S_{\sigma h}:   \aalpha(S_h)/C_0 \le |D^2 u-\A(x,Du)|\le C_0\aalpha(S_h) \big\}\Big|,
\end{equation}
where $S_h=S_h(x_0)$ with $x_0$ an arbitrary point inside $B(0,3/4)$,
and $C_0$ is universal.
\end{remark}

\begin{proof}[Proof of Theorem \ref{W21epsloc}] Let  $M\gg1$ to be fixed later, set $R_{0}:=3/4$, and for all $m\ge 1$ define
\begin{equation}\label{rk}
R_m:=R_{m-1}-\bar CM^{-\beta}.
\end{equation}
with  $\bar C$ and $\beta$ as in \eqref{diamsec}. Let us use denote $B(0,R_m)$ by $B_{R_m}$,
set
\begin{equation}
\label{eq:F}
F(x):=\bigl|D^2 u(x)-\A(x,Du(x))\bigr|, 
\end{equation}
and define 
\begin{equation}\label{dk}
D_m:=\left\{x\in B_{R_m}:\ F(x)\ge M^m\right\}.
\end{equation}
Thanks to Proposition \ref{secprop}, there exists $\rho>0$ universal such that
$S_h(x)\subset B(0,5/6)$ for any $x\in B(0,{3/4})$ and $h \leq 2\rho$, and 
by Lemma \ref{altezzecomp} applied with $h=\rho$
we get $\aalpha(S_\rho(x))\approx 1$.
In addition,
since {\bf (C0)} implies that $|\A(x,Du)|\leq C_1$ inside $B(0,1)$
for some $C_1$ universal, we see that $\bigl||D^2u| - F\bigr| \leq C_1$.
Hence, using Lemma \ref{eccen}, we deduce that if $M \gg M_1+C_1$ then there exists
a small universal constant $\nu>0$ such that
$$
\aalpha(S_h(x))\ge \nu M^m \qquad \forall\, x \in D_m,\, \,h \leq \min\{\bar h(x),\rho\},\, \,m \geq 1.
$$
So, by choosing
 $M\ge \max\{ 1/\nu^4, M_1\}$ (so that $\nu M^{m+1} \geq M^{m+1/2}/\nu$),
by continuity we obtain that, for every point in $D_{m+1}$, there exists $h_x \in (0,\min\{\bar h(x),\rho\})$ such that $\aalpha(S_{h_x}) \in (\nu M^{m+1/2}, M^{m+1/2}/\nu)$.
In particular, by  \eqref{diamsec} we have
${\rm diam} (S_{h_x})\le \bar C M^{-\beta}$, which implies that (recall \eqref{rk})
\begin{equation}
\label{eq:inclusion}
\bigcup_{x\in D_{m+1}}  S_{h_x} \subset B\left(0,R_{m+1}+\bar C M^{-\beta}\right)= B_{R_m}. 
\end{equation}
According  to Proposition \ref{vitali}, we can cover $D_{m+1}$ with finitely many  sections $\{S_{h_{x_j}}\}_{x_j \in D_{m+1}}$ such that $ S_{\sigma h_{x_j}}$ are disjoint.  Then \eqref{eq:ecc} and \eqref{main} imply (recall \eqref{eq:F})
\begin{equation*}
\begin{split}
\int_{D_{m+1}}F  \le \sum_j \int_{ S_{h_{x_j}}}F&\le \sum_j C_0\,\aalpha(S_{h_{x_j}})|\{x\in  S_{\sigma h_{x_j}}:   \aalpha(S_{h_{x_j}})/C_0 \le F\le C_0\aalpha(S_{h_{x_j}}) \}|\\
&\le \sum_j \frac{C_0}{\nu} \,M^{m+1/2}|\{x\in  S_{\sigma h_{x_j}}:   \nu M^{m+1/2}/C_0 \le F\le C_0 M^{m+1/2}/\nu \}|.
\end{split}
\end{equation*}
Assuming now that $\sqrt{M}\ge  C_0/\nu$ and recalling \eqref{eq:inclusion}, we obtain
\begin{equation}\label{stimedk}
\begin{split}
\int_{D_{m+1}}F\le \sum_j  \int_{ S_{h_{x_j}}}F&\le \sum_j \frac{C_0}{\nu} \,M^{m+1/2}|\{x\in  S_{\sigma h_{x_j}}:   \nu M^{m+1/2}/C_0 \le F\le C_0 M^{m+1/2}/\nu \}|\\
&\le \sum_j \frac{C_0}{\nu} \,M^{m+1/2}|\{x\in  S_{\sigma h_{x_j}}:   M^m \le F\le M^{m+1} \}|\\
&= \sum_j \frac{C_0}{\nu} \,M^{m+1/2}|\{x\in  S_{\sigma h_{x_j}}:   M^m \le F\le M^{m+1} \}\cap B_{R_m}|\\
&\le  \frac{ C_0\sqrt{M}}{\nu} \int_{D_m\setminus D_{m+1}}F.
\end{split}
\end{equation}
Adding $\frac{C_0\sqrt{M}}{\nu}  \int_{D_{m+1}}F$ to both sides of the previous inequality,
we obtain
\[
\Bigg(1+\frac {C_0\sqrt{M}}{\nu}\Bigg) \int_{D_{m+1}}F \le \frac{ C_0\sqrt{M}}{\nu}  \int_{D_m}F. 
\]
which implies 
\[
\int_{D_{m+1}}F\le (1-\tau)\int_{D_m}F
\]
for some small constant $\tau=\tau(M)>0$.
We finally fix  $M$ so that it also satisfies 
\begin{equation}\label{k0}
\sum_{m\ge 1} \bar CM^{-m\beta}\le \frac 1 4.
\end{equation}
In this way $R_m \geq 1/2$ for all $m \geq 1$, so that the above inequalities and the definition of $R_m$ imply
\[
\int_{\{F\ge M^m\}\cap B(0,{1/2})}F\le \int_{D_m}F\le (1-\tau)^m\int_{D_{0}}F\le C(1-\tau)^m
\]
(here we used that $\int_{B(0,{3/4})}F\le C$,
which can be easily proved arguing as in the proof of Lemma \ref{L^1}). Thus,
choosing $\kappa>0$ such that $1-\tau=M^{-2\kappa}$,
we deduce that 
\[
|\{F\ge t\}\cap B(0,{1/2})|\le\frac{1}{t} \int_{\{F\ge t\}\cap B(0,{1/2})}F \le Ct^{-1-2\kappa}
\]
for some $C>0$ universal, 
 which implies that $F \in L^{1+\kappa}(B(0,{1/2}))$.
 Recalling the definition of $F$ (see \eqref{eq:F}) and that $|\A(x,Du)|\leq C$ inside $B(0,{1})$ (by {\bf (C0)}),
 this concludes the proof.
\end{proof}

\end{document}